 \newtheorem{thm}{Theorem}[section]
 \newtheorem{cor}[thm]{Corollary}
 \newtheorem{prop}[thm]{Proposition}
 \theoremstyle{definition}
 \theoremstyle{remark}
 \newtheorem{rem}[thm]{Remark}
 \newtheorem{ex}[thm]{Example}
 \numberwithin{equation}{section}
\DeclareMathOperator*{\vnl}{VN_L(G)}
\DeclareMathOperator*{\vnr}{VN_R(G)}
\DeclareMathOperator*{\vnrp}{VN^+_R(G)}
\renewcommand{\asymp}{\sim}
\newcommand{\hdim}{Q_*}
\DeclareMathOperator{\supp}{supp}
\begin{document}

\title[$L^p$-$L^q$ norms of spectral multipliers]{An update on the $L^p$-$L^q$ norms of spectral \\ multipliers on unimodular Lie groups}

\author{David Rottensteiner}

\address{
Department of Mathematics: Analysis,
Logic and Discrete Mathematics \\
Ghent University \\
Krijgslaan 281, S8 \\
9000 Gent \\
Belgium}

\email{david.rottensteiner@ugent.be}


\author{Michael Ruzhansky}

\address{
Department of Mathematics: Analysis,
Logic and Discrete Mathematics \\
Ghent University \\
Krijgslaan 281, S8 \\
9000 Gent \\
Belgium \\
and \\
School of Mathematical Sciences \\
Queen Mary University of London \\
Mile End Road \\
London E1 4NS \\
United Kingdom}

\email{michael.ruzhansky@ugent.be}

\subjclass{Primary 35Pxx; Secondary 22E30}

\keywords{Spectral multiplier, Lie group, unimodular, sub-Laplacian, Rockland operator}

\date{}

\maketitle

\begin{abstract}
This note gives a wide-ranging update on the multiplier theorems by Akylzhanov and the second author [J. Funct. Anal., 278 (2020), 108324]. The proofs of the latter crucially rely on $L^p$-$L^q$ norm estimates for spectral projectors
of left-invariant weighted subcoercive operators on unimodular Lie groups, such as Laplacians, sub-Laplacians and Rockland operators.
By relating spectral projectors to heat kernels, explicit estimates of the $L^p$-$L^q$ norms can be immediately exploited for a much wider range of (connected unimodular) Lie groups and operators than previously known. The comparison with previously established bounds by the authors show that the heat kernel estimates are sharp.
As an application, it is shown that several consequences of the multiplier theorems, such as time asymptotics for the $L^p$-$L^q$ norms of the heat kernels and Sobolev-type embeddings, 
are then automatic for the considered operators.
\end{abstract}

Let $G$ be a connected unimodular Lie group and let  $\lambda_G$ and $\rho_G$ be the left and right
regular representations, respectively, on $L^2 (G)$. Denote the associated 
left and right group von Neumann algebras by
\begin{align*}
\vnl := \lambda_G (G)'' \quad \text{and} \quad \vnr := \rho_G (G)'',
\end{align*}
respectively. Given any left-invariant operator $T \in \mathcal{L}(L^2 (G))$, i.e.,
\begin{align*}
T \lambda_G (x) = \lambda_G (x) T, \quad x \in G,
\end{align*}
the double commutant theorem implies that $T \in \vnr$. 
By the Schwartz kernel theorem, there exists a unique distribution $K_T \in \mathcal{D}' (G)$
such that 
\begin{align*}
T f(x) = f \ast K_T (x) = \int_G K_T (y^{-1} x) f(y) \; dy, \quad x \in G,
\end{align*}
for $f \in \mathcal{D}(G)$.
A trace $\tau$ on the positive part $\vnrp$ of $\vnr$ can, for example, be defined by
\begin{align}
\begin{split} \label{eq:trace}
\tau(T) := 
\begin{cases}
\| K_{T^{1/2}} \|_{L^2(G)}^2 \quad & \text{if} \; K_{T^{1/2}} \in L^2 (G), \\
\infty & \text{otherwise} .
\end{cases}
\end{split}
\end{align}

Let $\mathcal{L}$ be a (possibly unbounded) positive self-adjoint operator 
on $L^2 (G)$. Then by the spectral theorem, 
the operator $\mathcal{L}$ admits a spectral resolution
$\mathcal{L} = \int_0^\infty \lambda dE_{\lambda}$.
Given a Borel measurable function $\varphi : \mathbb{R} \to \mathbb{C}$, the operator
\begin{align*}
\varphi (\mathcal{L}) = \int_0^\infty \varphi(\lambda) dE_{\lambda}
\end{align*}
is bounded on $L^2 (G)$ if and only if the function $\varphi$ is $E$-essentially bounded.
If $\mathcal{L}$ is left-invariant, then so is the operator $\varphi(\mathcal{L})$.

In \cite{AkRu20}, it was shown that the $L^p$-$L^q$ norms of $\varphi(\mathcal{L})$ depend essentially only on the growth of the spectral projections $\tau(E_{(0, s)}(|\mathcal{L}|))$ of $\mathcal{L}$ for any trace $\tau$ on $\vnrp$, in particular \eqref{eq:trace}. More precisely, the result \cite[Thm.~1.2]{AkRu20} asserts:

\begin{thm}[\cite{AkRu20}] \label{thm:spectralmultiplier_intro}
Let $G$ be a locally compact separable unimodular group and let $\mathcal{L}$ be a left-invariant 
operator on $G$. Assume that $\varphi : [0, \infty) \to \mathbb{R}$ is a monotonically decreasing continuous function such that
\begin{align*}
\varphi(0) &= 1, \\ 
\lim_{s \to \infty} \varphi(s) &= 0.
\end{align*}
Then
\begin{align} \label{eq:LpLqnorm}
\| \varphi(|\mathcal{L}|) \|_{L^p(G) \to L^q(G)} \lesssim \sup_{s>0} \varphi(s) [\tau (E_{(0, s)} (|\mathcal{L}|))]^{\frac{1}{p} - \frac{1}{q}},
\end{align}
for every $1 < p \leq 2 \leq q < \infty$.
\end{thm}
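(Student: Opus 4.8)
The plan is to recognize the right-hand side of \eqref{eq:LpLqnorm} as a Lorentz quasi-norm on $\vnr$ and to deduce the estimate from a noncommutative Hörmander-type multiplier inequality. Write $T := \varphi(|\mathcal{L}|)$, a positive element affiliated with $\vnr$, and let $\mu_t(T)$ be its generalized singular-value function with respect to $\tau$, i.e.\ the nonincreasing rearrangement of $T$. Since $\varphi$ is continuous, non-negative and decreasing, the spectral calculus gives $E_{\{\varphi(|\mathcal{L}|)>s\}} = E_{(0,\,\varphi^{-1}(s))}(|\mathcal{L}|)$ for the superlevel projections, so the $\tau$-distribution function of $T$ is $\tau(E_{(0,\varphi^{-1}(s))}(|\mathcal{L}|))$. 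Writing $\tfrac1r = \tfrac1p - \tfrac1q \ge 0$, a direct computation with this rearrangement then yields
\[
\sup_{t>0} t^{1/r}\,\mu_t(T) \;=\; \sup_{s>0}\varphi(s)\,\big[\tau(E_{(0,s)}(|\mathcal{L}|))\big]^{\frac1p-\frac1q},
\]
so that the right-hand side of \eqref{eq:LpLqnorm} is exactly $\|T\|_{L^{r,\infty}(\vnr,\tau)}$. The theorem is therefore equivalent to the multiplier bound $\|T\|_{L^p(G)\to L^q(G)} \lesssim \|T\|_{L^{r,\infty}(\vnr,\tau)}$.

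Two endpoints anchor the argument. At the diagonal $p=q=2$ one has $1/r = 0$, and Plancherel's theorem for $\vnr$ identifies $\|T\|_{L^2\to L^2}$ with the operator norm $\mu_0(T) = \|T\|_{\vnr}$, matching the right-hand side. Off the diagonal I would first record the noncommutative Hausdorff–Young inequalities: for $1\le a\le2$ one has $\|\rho_G(h)\|_{L^{a'}(\vnr,\tau)}\le\|h\|_{L^a(G)}$, obtained by complex interpolation between $\|\rho_G(h)\|_{\vnr}\le\|h\|_{L^1(G)}$ and Plancherel, together with its dual $\|h\|_{L^q(G)}\le\|\rho_G(h)\|_{L^{q'}(\vnr,\tau)}$ for $2\le q<\infty$. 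Applying the dual form to the function $Tf = f\ast K_T$ and using $\rho_G(Tf)=T\,\rho_G(f)$ reduces matters to estimating the noncommutative norm $\|T\,\rho_G(f)\|_{L^{q'}(\vnr,\tau)}$ in terms of $\|T\|_{L^{r,\infty}}$ and $\|f\|_{L^p(G)}$.

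The crux — and the step I expect to be the main obstacle — is a Paley-type inequality in the semifinite von Neumann setting, which is what allows the \emph{weak} ($L^{r,\infty}$) quantity, rather than a strong $L^r$ norm, to appear on the right. Schematically, for $1<p\le2$ and a positive $w$ affiliated with $\vnr$ one wants
\[
\Big(\tau\big(w^{\frac{2-p}{2}}\,|\rho_G(f)|^{p}\,w^{\frac{2-p}{2}}\big)\Big)^{1/p} \;\lesssim\; \Big(\sup_{s>0} s\,\tau(E_{\{w>s\}})\Big)^{\frac1p-\frac12}\,\|f\|_{L^p(G)}.
\]
I would prove this by a dyadic layer-cake decomposition of $w$ in the spectral variable, bounding the contribution of each layer by Plancherel and summing the layers by a Chebyshev/Marcinkiewicz argument that converts the weak-type hypothesis $\sup_s s\,\tau(E_{\{w>s\}})$ into the stated power. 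The delicate point is to carry out the layer decomposition and the associated superlevel ``measures'' $\tau(E_{\{w>s\}})$ rigorously when $\tau$ is only semifinite rather than finite, and to control the non-commutativity of $w$ and $\rho_G(f)$.

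Finally I would assemble the theorem by interpolating the Paley inequality (which governs the $L^p$ input) against the Hausdorff–Young inequality (which governs the $L^q$ output), running the interpolation with variable exponents and the second Lorentz index equal to $\infty$. Choosing the weight $w$ as the appropriate power of $T$, the supremum $\sup_s s\,\tau(E_{\{w>s\}})$ is precisely $\|T\|_{L^{r,\infty}(\vnr,\tau)}$, and tracking this quantity through the real-interpolation functor reproduces the claimed weak dependence on $T$ without the loss that a direct noncommutative Hölder estimate would incur. The restriction to $1<p\le2\le q<\infty$ enters exactly where Hausdorff–Young and Paley require an exponent in $[1,2]$, and the two halves $p\le2$ and $q\ge2$ are linked by the self-adjointness of $T$ together with $L^p$–$L^{q'}$ duality, which exchanges the roles of the input and output exponents.
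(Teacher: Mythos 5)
Your proposal is correct and takes essentially the same route as the proof in \cite{AkRu20} which the paper cites: recasting the right-hand side as the noncommutative Lorentz quasi-norm $\|\varphi(|\mathcal{L}|)\|_{L^{r,\infty}(\vnr,\tau)}$ with $\tfrac1r=\tfrac1p-\tfrac1q$, and then combining the noncommutative Hausdorff--Young inequality, a semifinite Paley-type inequality, interpolation, and a duality reduction to the range where the exponents are admissible is precisely the H\"ormander-style argument carried out there. The semifinite Paley inequality you flag as the crux is indeed the technical core of that proof, and it is established there by essentially the layer-cake/Marcinkiewicz scheme you sketch.
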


Theorem \ref{thm:spectralmultiplier_intro} has the following simple consequences, e.g., see \cite[Cor.~1.3]{AkRu20}. 

\begin{cor} \label{cor:spectralmultiplier_intro}
Let $G$ be a locally compact separable unimodular group and let $\mathcal{L}$ be a positive left-invariant operator on $G$.  Suppose that there exists an $\alpha \in \mathbb{R}$ such that 
\begin{align} \label{eq:assumption_corollary}
\tau (E_{(0, s)} (\mathcal{L})) \lesssim s^{\alpha}, \quad s \to \infty.
\end{align}
Then, for any $1 < p \leq 2 \leq q < \infty$, there is a constant $C = C(p, q) > 0$ such that
\begin{align}
\| e^{-s \mathcal{L}} \|_{L^p(G) \to L^q (G)} \leq C s^{-\alpha (\frac{1}{p} - \frac{1}{q})}, \quad s \to \infty.
\end{align}
Moreover, if
\begin{align*}
\gamma \geq \alpha \bigg( \frac{1}{p} - \frac{1}{q} \bigg),
\end{align*}
then the embeddings
\begin{align*}
\| f \|_{L^q (G)} \leq C \| (1+\mathcal{L})^{\gamma} f \|_{L^p (G)}
\end{align*}
hold for every $1 < p \leq 2 \leq q < \infty$.
\end{cor}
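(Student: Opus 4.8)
The plan is to derive both assertions by inserting two explicit multipliers into Theorem~\ref{thm:spectralmultiplier_intro} and then performing an elementary one-variable optimisation. Throughout I keep $s$ for the heat time, as in the statement, and relabel the spectral variable in \eqref{eq:LpLqnorm} as $\lambda$; I abbreviate $\beta := \alpha\bigl(\tfrac1p-\tfrac1q\bigr)$, which is nonnegative for $1<p\le 2\le q<\infty$ and $\alpha\ge 0$.

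For the heat-semigroup bound I would apply Theorem~\ref{thm:spectralmultiplier_intro} to the function $\varphi(\lambda)=e^{-s\lambda}$ for each fixed $s>0$. This $\varphi$ is continuous, monotonically decreasing, satisfies $\varphi(0)=1$, and tends to $0$ as $\lambda\to\infty$, so the hypotheses are met and \eqref{eq:LpLqnorm} gives
\begin{align*}
\|e^{-s\mathcal{L}}\|_{L^p(G)\to L^q(G)} \lesssim \sup_{\lambda>0} e^{-s\lambda}\,\bigl[\tau(E_{(0,\lambda)}(\mathcal{L}))\bigr]^{\frac1p-\frac1q}.
\end{align*}
Inserting the growth hypothesis \eqref{eq:assumption_corollary} replaces the bracket by $\lambda^{\beta}$, and the substitution $v=s\lambda$ factorises the remaining supremum as $\sup_{\lambda>0}e^{-s\lambda}\lambda^{\beta}=s^{-\beta}\sup_{v>0}e^{-v}v^{\beta}$. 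The universal constant $\sup_{v>0}e^{-v}v^{\beta}=(\beta/e)^{\beta}$ is finite (it is attained at $v=\beta$), whence $\|e^{-s\mathcal{L}}\|_{L^p\to L^q}\lesssim s^{-\beta}=s^{-\alpha(\frac1p-\frac1q)}$, as claimed.

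For the Sobolev-type embedding I would first reduce the estimate to the boundedness of a single operator: writing $f=(1+\mathcal{L})^{-\gamma}(1+\mathcal{L})^{\gamma}f$, it suffices to show that $(1+\mathcal{L})^{-\gamma}$ maps $L^p(G)$ into $L^q(G)$. I would then apply Theorem~\ref{thm:spectralmultiplier_intro} with $\varphi(\lambda)=(1+\lambda)^{-\gamma}$, which for $\gamma>0$ is again continuous, decreasing, equal to $1$ at $\lambda=0$, and vanishing at infinity. Combining \eqref{eq:LpLqnorm} with \eqref{eq:assumption_corollary} yields
\begin{align*}
\|(1+\mathcal{L})^{-\gamma}\|_{L^p(G)\to L^q(G)} \lesssim \sup_{\lambda>0}(1+\lambda)^{-\gamma}\lambda^{\beta},
\end{align*}
and this last supremum is finite exactly when the decay of $(1+\lambda)^{-\gamma}$ overtakes the growth $\lambda^{\beta}$ at infinity, i.e.\ when $\beta-\gamma\le 0$. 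That is precisely the stated range $\gamma\ge\alpha\bigl(\tfrac1p-\tfrac1q\bigr)$, and the embedding follows.

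The one point requiring genuine care — and the main obstacle — is that the supremum in \eqref{eq:LpLqnorm} ranges over all $\lambda>0$, whereas \eqref{eq:assumption_corollary} only constrains $\lambda\to\infty$; one must therefore ensure the power-law bound is available on the part of the half-line where the supremum is actually governed. For the homogeneous operators that motivate this note (notably Rockland operators) this is automatic, since scaling forces $\tau(E_{(0,\lambda)}(\mathcal{L}))$ to be an exact power of $\lambda$ for every $\lambda>0$, and the bound may be substituted into the supremum directly. In the general case one splits the supremum at a threshold beyond which \eqref{eq:assumption_corollary} holds and checks that the complementary range does not control the estimate in the relevant asymptotic regime. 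Confirming this reconciliation, together with the (routine) verification that each chosen $\varphi$ is monotonically decreasing on the whole half-line, is the only delicate step; the remainder is the elementary calculus carried out above.
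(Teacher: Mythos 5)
The paper does not actually reprove this corollary: it is imported from \cite{AkRu20} (Cor.~1.3 there), where it is obtained exactly as you propose — specialize $\varphi(\lambda)=e^{-s\lambda}$, respectively $\varphi(\lambda)=(1+\lambda)^{-\gamma}$, in Theorem~\ref{thm:spectralmultiplier_intro}, insert the growth hypothesis \eqref{eq:assumption_corollary}, and optimize in the spectral variable. Your calculus ($\sup_{\lambda>0}e^{-s\lambda}\lambda^{\beta}=(\beta/e)^{\beta}s^{-\beta}$, and the reduction of the embedding to $L^p$--$L^q$ boundedness of $(1+\mathcal{L})^{-\gamma}$ via $f=(1+\mathcal{L})^{-\gamma}(1+\mathcal{L})^{\gamma}f$) is correct and is the intended argument.

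The one point where your write-up falls short of a proof is the final paragraph. For the Sobolev embedding the issue you raise is harmless: on $\lambda\le\lambda_0$ monotonicity of $\lambda\mapsto\tau(E_{(0,\lambda)}(\mathcal{L}))$ gives $(1+\lambda)^{-\gamma}\,\tau(E_{(0,\lambda)}(\mathcal{L}))^{\frac1p-\frac1q}\le\tau(E_{(0,\lambda_0)}(\mathcal{L}))^{\frac1p-\frac1q}<\infty$, and mere finiteness of the supremum suffices there. For the heat bound, however, your claim that the complementary range ``does not control the estimate in the relevant asymptotic regime'' is backwards: as $s\to\infty$ the maximizer of $e^{-s\lambda}\lambda^{\beta}$ sits at $\lambda=\beta/s\to0$, i.e.\ precisely in the region where a hypothesis read literally as an asymptotic at infinity gives no power-law control; monotonicity only yields an $O(1)$ bound on that piece, not $O(s^{-\beta})$. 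To close the argument one must either read \eqref{eq:assumption_corollary} as holding for all $s>0$ (which is how it is used in \cite{AkRu20} and how it is verified in every example of this note, e.g.\ via Propositions~\ref{prop:trace_heat} and~\ref{lem:L2_bound_hk}, combined with the vanishing of $\tau(E_{(0,\lambda)}(\mathcal{L}))$ as $\lambda\downarrow0$ at a matching rate), or impose some control of the spectral trace near $\lambda=0$. With that reading, your proof is complete and coincides with the one the paper relies on.
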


The number $\alpha \in \mathbb{R}$ appearing in hypothesis \eqref{eq:assumption_corollary} 
was explicitly computed in \cite[\S 7]{AkRu20} for several classes of groups and operators. 
In particular, for the sub-Laplacians on compact groups, the sub-Laplacian on the Heisenberg group $\mathbb{H}_n$, and some higher order Rockland operators on $\mathbb{H}_n$. Furthermore, values of $\alpha$ for general positive Rockland operators on graded groups have been given in \cite[\S 8]{RoRu18}.
For all of these examples, the asymptotics were shown to be sharp. The proofs, however, crucially rely on results that are particular for the specific settings and operators under consideration, such as \cite{tERo97,HaKo}.

The main goal of this note is to substantially extend the range of applicability, for which \cite[Thm.~1.2]{AkRu20} and \cite[Cor.~1.3]{AkRu20} hold true, by exploiting the fact that in each previously known example the operator $\mathcal{L}$ can be interpreted as an instance of the so-called \textit{weighted subcoercive operators} defined by ter Elst and Robinson. In their seminal paper~\cite{tERo98}, the authors proved Gaussian-type heat kernel estimates for subcoercive operators of arbitrarily high order, previously only known for the heat kernels of sub-Laplacians and for Rockland operators on graded groups. The spectral theory of systems of pairwise commuting weighted subcoercive operators was later developed by Alessio Martini~\cite{Ma11}. Among others, the author proved estimates of the spectral projectors in terms of heat kernel estimates. The following proposition is a special case of~\cite[Prop.~3.11]{Ma11}, in which the system is given by a single subcoercive operator.

\begin{prop}[\cite{Ma11}] \label{prop:martini}
Let $G$ be a connected unimodular Lie group with Lie algebra $\mathfrak{g}$.
Suppose that $\mathcal{L}$ 
is an $m$-th order weighted subcoercive operator on $G$. Then 
\begin{align} \label{est:spectral_growth}
\tau(E_{(0, s)}(|\mathcal{L}|)) \lesssim s^{\frac{\hdim}{m}}, \quad s \to \infty, 
\end{align}
where $\hdim$ is the \textit{local dimension} of $G$ relative to the chosen weighted structure on $\mathfrak{g}$.

In particular, this holds true for every left-invariant hypoelliptic (positive) sub-Laplaci\-an $\mathcal{L}$ on $G$, in which case $\hdim$ equals the Hausdorff dimension of $G$ and $m = 2$, as well as for every positive Rockland operator $\mathcal{L}$ on a graded group $G$, in which case $\hdim$ equals the homogeneous dimension of $G$ and $m$ equals the homogeneous order of $\mathcal{L}$.
\end{prop}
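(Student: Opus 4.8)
The plan is to derive the spectral projector bound \eqref{est:spectral_growth} from the pointwise heat kernel estimates of ter Elst and Robinson via the trace formula \eqref{eq:trace}. The key observation is that the trace $\tau(E_{(0,s)}(|\mathcal{L}|))$ can be controlled by evaluating a suitable heat semigroup at the identity. Concretely, for the (positive self-adjoint) operator $|\mathcal{L}|$ and any $t>0$, spectral calculus gives the elementary majorization
\begin{align*}
\tau(E_{(0, s)}(|\mathcal{L}|)) \leq e \cdot \tau\bigl(e^{-t |\mathcal{L}|} E_{(0,s)}(|\mathcal{L}|)\bigr) \leq e \cdot \tau\bigl(e^{-t|\mathcal{L}|}\bigr)
\end{align*}
whenever we choose $t = 1/s$, since on the range of $E_{(0,s)}$ one has $e^{-t\lambda} \geq e^{-ts} = e^{-1}$, and $\tau$ is positive and monotone. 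Thus the whole problem reduces to estimating the trace of the heat semigroup $e^{-t|\mathcal{L}|}$ as $t \to 0$.

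Next I would identify this trace with the value of the heat kernel at the group identity. Writing $p_t = K_{e^{-t|\mathcal{L}|}}$ for the convolution kernel of the semigroup, the semigroup property together with unimodularity gives $e^{-t|\mathcal{L}|} = (e^{-(t/2)|\mathcal{L}|})^* e^{-(t/2)|\mathcal{L}|}$ when $\mathcal{L}$ is self-adjoint, so that by the definition \eqref{eq:trace} of $\tau$ one obtains
\begin{align*}
\tau\bigl(e^{-t|\mathcal{L}|}\bigr) = \| p_{t/2} \|_{L^2(G)}^2 = \int_G |p_{t/2}(x)|^2 \, dx = p_t(e),
\end{align*}
where the last equality again uses unimodularity and the self-adjointness of the kernel (so that $\overline{p_{t/2}(x)} = p_{t/2}(x^{-1})$ and the $L^2$-norm squared equals the value of $p_t$ at the identity $e$). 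Hence it remains to bound $p_t(e)$ as $t \to 0$.

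Finally I would invoke the Gaussian-type heat kernel estimates of ter Elst and Robinson \cite{tERo98} for $m$-th order weighted subcoercive operators. These provide an on-diagonal upper bound of the form $p_t(e) \lesssim t^{-Q_*/m}$ as $t \to 0$, where $Q_*$ is precisely the local dimension attached to the weighted structure, reflecting the small-time volume growth $|B(e, r)| \asymp r^{Q_*}$ of the associated control (Carnot--Carath\'eodory-type) balls. Combining this with the two previous steps and substituting $t = 1/s$ yields
\begin{align*}
\tau(E_{(0,s)}(|\mathcal{L}|)) \leq e \cdot p_{1/s}(e) \lesssim (1/s)^{-Q_*/m} = s^{Q_*/m}, \quad s \to \infty,
\end{align*}
which is exactly \eqref{est:spectral_growth}. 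The special cases follow by recalling the standard identifications of $Q_*$: for a hypoelliptic sub-Laplacian the local dimension coincides with the Hausdorff dimension and $m=2$, while for a positive Rockland operator on a graded group homogeneity forces $Q_*$ to equal the homogeneous dimension and $m$ the homogeneous degree.

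The main obstacle I anticipate is making the kernel-theoretic identities in the second step fully rigorous: one must ensure that $p_{t/2}$ genuinely lies in $L^2(G)$ (so that $\tau$ takes a finite value rather than $\infty$) and that the formal manipulation $\tau(A^*A) = \|K_A\|_{L^2}^2$ applies with $A = e^{-(t/2)|\mathcal{L}|}$. This is where the heat kernel estimates of \cite{tERo98} are essential not merely for the final bound but for the well-definedness of the trace, and where the subcoercivity hypothesis does the real work; the spectral and unimodularity arguments surrounding it are comparatively routine.
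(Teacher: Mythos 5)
Your argument is correct, and its first step coincides with the paper's: the paper also dominates $\chi_{(0,s)}$ by a constant multiple of $e^{-\,\cdot\,/s}$, phrased there via the Plancherel measure $\mu$ attached to $s \mapsto \tau(E_s)$, which gives $\tau(E_{(0,s)}(|\mathcal{L}|)) \lesssim \| k_{1/s} \|_{L^2(G)}^2$ (Proposition \ref{prop:trace_heat}); your operator-monotonicity formulation is equivalent. Where you genuinely diverge is in how this quantity is then bounded. The paper estimates $\| k_t \|_{L^2(G)}^2$ by integrating the Gaussian bound \eqref{eq:est_hk_tERo} over $G$, which is the bulk of the work in Proposition \ref{lem:L2_bound_hk}: a dyadic decomposition into annuli, the small-radius volume asymptotics $|B_r(e)| \sim r^{\hdim}$, and the exponential control $|B_r(x)| \lesssim e^{\beta r}$ for large radii. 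You instead use the semigroup and trace identities $\tau(e^{-t|\mathcal{L}|}) = \| p_{t/2} \|_{L^2(G)}^2 = p_t(e)$ and read the bound off the on-diagonal value of \eqref{eq:est_hk_tERo} at $x = e$, where the Gaussian factor equals $1$; this bypasses the integration and all volume-growth considerations. Both routes are valid on a unimodular group, and yours is shorter; the paper's version has the side benefit of recording the $L^2$ heat-kernel asymptotics as a statement of independent interest (and notes it also follows by interpolating the $L^1$ and $L^\infty$ kernel bounds). The points you should make explicit are the ones you already flag: $k_{t/2} \in L^1(G) \cap C_0^\infty(G) \subseteq L^2(G)$ by Theorem \ref{thm:subcoercive}(a), so the trace is finite and $\tau(A^*A) = \|K_A\|_{L^2(G)}^2$ applies; and, exactly as in the paper's own proof, one implicitly works with $\mathcal{L}$ positive self-adjoint so that the kernel of $e^{-t|\mathcal{L}|}$ is the kernel $k_t$ to which \eqref{eq:est_hk_tERo} refers.
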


Combining Theorem~\ref{thm:spectralmultiplier_intro} and Proposition~\ref{prop:martini}, we obtain the main result of this note:

\begin{thm} \label{thm:main}
Let $G$ be a connected unimodular Lie group with Lie algebra $\mathfrak{g}$ and let $\mathcal{L}$ 
be an $m$-th order positive weighted subcoercive operator on $G$.
Assume that $\varphi : [0, \infty) \to \mathbb{R}$ is a monotonically decreasing continuous function such that
\begin{align*}
\varphi(0) &= 1, \\ 
\lim_{s \to \infty} \varphi(s) &= 0.
\end{align*}
Then, for any $1 < p \leq 2 \leq q < \infty$, there is a constant $C = C(p, q) > 0$ such that
\begin{align} \label{eq:LpLqnorm*}
\| \varphi(|\mathcal{L}|) \|_{L^p(G) \to L^q(G)} \leq C \sup_{s>0} \varphi(s) \hspace{2pt} s^{\frac{\hdim}{m} (\frac{1}{p} - \frac{1}{q})}, \quad s \to \infty,
\end{align}
where $\hdim$ is the \textit{local dimension} of $G$ relative to the chosen weighted structure on $\mathfrak{g}$. In particular,
\begin{align} \label{eq:heat}
\| e^{-s \mathcal{L}} \|_{L^p(G) \to L^q (G)} \leq C s^{- \frac{\hdim}{m} (\frac{1}{p} - \frac{1}{q})}, \quad s \to \infty.
\end{align}
Moreover, if
\begin{align*}
\gamma \geq \frac{\hdim}{m} \bigg( \frac{1}{p} - \frac{1}{q} \bigg),
\end{align*}
then the embeddings
\begin{align} \label{eq:embeddings}
\| f \|_{L^q (G)} \leq C \| (1+\mathcal{L})^{\gamma} f \|_{L^p (G)}
\end{align}
hold for every $1 < p \leq 2 \leq q < \infty$.
\end{thm}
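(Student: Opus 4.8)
The plan is to obtain Theorem~\ref{thm:main} as a direct combination of the two results already at our disposal, namely Theorem~\ref{thm:spectralmultiplier_intro} and Proposition~\ref{prop:martini}, together with the elementary consequences packaged in Corollary~\ref{cor:spectralmultiplier_intro}. First I would verify that the hypotheses line up: a connected unimodular Lie group is in particular locally compact separable unimodular, and an $m$-th order positive weighted subcoercive operator $\mathcal{L}$ is a positive self-adjoint left-invariant operator, so both Theorem~\ref{thm:spectralmultiplier_intro} and Proposition~\ref{prop:martini} apply. Since $\mathcal{L}$ is assumed positive we may write $|\mathcal{L}| = \mathcal{L}$ throughout.

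\begin{proof}
Let $G$ be a connected unimodular Lie group and let $\mathcal{L}$ be an $m$-th order positive weighted subcoercive operator on $G$, so that $\mathcal{L}$ is positive and self-adjoint on $L^2(G)$ and $|\mathcal{L}| = \mathcal{L}$. By Proposition~\ref{prop:martini}, the spectral projections of $\mathcal{L}$ satisfy
\begin{align} \label{eq:apply_martini}
\tau(E_{(0, s)}(\mathcal{L})) \lesssim s^{\frac{\hdim}{m}}, \quad s \to \infty,
\end{align}
where $\hdim$ is the local dimension of $G$ relative to the chosen weighted structure. Inserting the bound \eqref{eq:apply_martini} into the right-hand side of the estimate \eqref{eq:LpLqnorm} furnished by Theorem~\ref{thm:spectralmultiplier_intro} yields, for every $1 < p \leq 2 \leq q < \infty$,
\begin{align*}
\| \varphi(\mathcal{L}) \|_{L^p(G) \to L^q(G)} \lesssim \sup_{s>0} \varphi(s) \, [\tau(E_{(0, s)}(\mathcal{L}))]^{\frac{1}{p} - \frac{1}{q}} \leq C \sup_{s>0} \varphi(s) \, s^{\frac{\hdim}{m}(\frac{1}{p} - \frac{1}{q})},
\end{align*}
which is precisely \eqref{eq:LpLqnorm*}. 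The heat semigroup estimate \eqref{eq:heat} and the Sobolev-type embeddings \eqref{eq:embeddings} now follow by applying Corollary~\ref{cor:spectralmultiplier_intro} with $\alpha = \hdim / m$, since the hypothesis \eqref{eq:assumption_corollary} of that corollary is exactly \eqref{eq:apply_martini}; indeed, taking $\varphi(s) = e^{-ts}$ (rescaled so that $\varphi(0) = 1$) recovers \eqref{eq:heat}, and the stated condition on $\gamma$ matches the embedding hypothesis of the corollary.
\end{proof}

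The argument is essentially a bookkeeping exercise, so I do not expect any genuine obstacle; the only point requiring a moment's care is the passage from the bound \eqref{eq:apply_martini}, which is asymptotic as $s \to \infty$, to the supremum over all $s > 0$ appearing in \eqref{eq:LpLqnorm*}. Because $\varphi$ is continuous and decreasing with $\varphi(0) = 1$ and $\varphi(s) \to 0$, the contribution of the bounded range of small $s$ is controlled by a constant, so absorbing this into $C$ presents no difficulty and the supremum is governed by the large-$s$ asymptotics, exactly as required by Corollary~\ref{cor:spectralmultiplier_intro}.
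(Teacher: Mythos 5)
Your proposal is correct and follows exactly the same route as the paper: the authors likewise obtain \eqref{eq:LpLqnorm*} by inserting the spectral-projection bound of Proposition~\ref{prop:martini} into Theorem~\ref{thm:spectralmultiplier_intro}, and then invoke Corollary~\ref{cor:spectralmultiplier_intro} (with $\alpha = \hdim/m$) for \eqref{eq:heat} and \eqref{eq:embeddings}. Your closing remark about passing from the asymptotic bound to the supremum is a reasonable extra point of care, but otherwise the argument matches the paper's proof.
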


We stress that the asymptotics \eqref{est:spectral_growth} provide a refinement of the $L^p$-$L^q$ norms \eqref{eq:LpLqnorm} in Theorem~\ref{thm:spectralmultiplier_intro} and that it shows that the hypothesis 
\eqref{eq:assumption_corollary} is automatically satisfied for all weighted subcoercive operators on a unimodular Lie group. Moreover, the estimate in \eqref{est:spectral_growth} is sharp as will be explained below.

The proof of Proposition~\ref{prop:martini} consists of bounding the spectral traces 
by the $L^2$-norm of the heat kernel associated with $\mathcal{L}$ at time $t = 1/s$, making explicit use of the Gaussian-type estimates \cite{tERo98, Ma11}. In fact, the subcoercivity gives a characterization of the generators of semigroups with kernels satisfying such Gaussian bounds (see \cite{tERo98}) 
and provides therefore a natural and general setting in which the proof method works. Since the $L^2$-estimate of the heat kernel in the proof of~\cite[Prop.~3.11]{Ma11} is not explicitly worked out, we provide a short proof of Proposition~\ref{prop:martini} in Section~\ref{sec:proof} in view of its importance for this note.

\section{Weighted Subcoercive Operators} \label{section:subco}

This section provides preliminaries on weighted subcoercive operators 
due to ter Elst and Robinson \cite{tERo98} and Martini~\cite{Ma11}. More details can be found in \cite{tERo98, MaTh, Ma11, dungey2003analysis, elst1994weighted}. 

\subsection{Weighted Lie algebras and their contractions}
Let $G$ be a $d$-dimensional connected Lie group with Lie algebra $\mathfrak{g}$. An algebraic basis $X_1, \ldots, X_{d'}$ of $\mathfrak{g}$ is a set of linearly independent elements which together with their multi-commutators span $\mathfrak{g}$. An algebraic basis $X_1, \ldots, X_{d'}$ is said to be a \textit{weighted algebraic basis} if we associate to each vector $X_j$, $j = 1, \ldots, d'$, a so-called \textit{weight} $w_j \in [1, \infty)$. We assume that 
$\bigcap_{j = 1}^{d'} w_j \mathbb{N} \neq \emptyset$ and set
\[
w = \min_{s \in [1,\infty)} \big\{s \in w_j \mathbb{N}, \; \forall j \in \{1, ..., d'\} \big\}.
\]
We denote by $J(d')$ the set of finite sequences of elements of $\{1, ..., d'\}$. 
For arbitrary $\alpha = (\alpha_1, ..., \alpha_n) \in J(d')$, let $|\alpha| = n$ denote the Euclidean length of $\alpha$ 
and denote by  $\| \alpha \| : = \sum_{j = 1}^{d'} w_{\alpha_j}$ the weighted length of $\alpha$. 
Moreover, we define
\[
X^{\alpha} = X_{\alpha_1} \cdots X_{\alpha_n}
\]
as an element of the universal enveloping algebra $U (\mathfrak{g})$ of $\mathfrak{g}$. 

A given weighted algebraic basis defines a filtration $\{ F_\lambda \}_{\lambda \in \mathbb{R}}$ on $\mathfrak{g}$,
\begin{align*}
F_\lambda := \mathrm{span} \bigl \{ [ \, \ldots \, [X_{\alpha_1}, X_{\alpha_2}], \ldots, X_{\alpha_n}] \mid \text{ non-empty } \alpha \in J(d'), \| \alpha \| \leq \lambda \bigr \},
\end{align*}
which satisfies
\begin{align*}
[F_\lambda, F_\mu] \subseteq F_{\lambda + \mu}, \hspace{10pt} F_\lambda = \bigcap_{\mu > \lambda} F_\mu,  \hspace{10pt} \bigcup_{\lambda \in \mathbb{R}} F_\lambda = \mathfrak{g}.
\end{align*}
A weighted algebraic basis of $\mathfrak{g}$ is said to be \textit{reduced} if for all $\lambda \in \mathbb{R}$
\begin{align*}
\mathrm{span} \bigl \{ X_{\alpha_j} \mid w_j = \lambda \bigr \} \bigcap F^-_\lambda = \{ 0 \}, \text{ where } F^-_\lambda := \bigcup_{\mu < \lambda} F_\mu = \{ 0 \}.
\end{align*}
Given a weighted basis and a filtration, there always exists a reduced weighted basis $X_1, \ldots, X_{d''}$ of $\mathfrak{g}$ which defines the same filtration. A Lie algebra equipped with a reduced weighted basis is said to be \textit{weighted}. Any such Lie algebra admits an associated homogeneous Lie algebra $\mathfrak{g}_*$ together with a decomposition
\begin{align*}
\mathfrak{g}_* = W_{\lambda_1} \oplus \ldots \oplus W_{\lambda_k}
\end{align*}
into a direct sum of eigenspaces of the homogeneous dilations, defined by $W_{\lambda_j} := F_{\lambda_j} \neq F^-_{\lambda_j}$, for weights $1 \leq \lambda_1 < \ldots < \lambda_k$ determined by $F_{\lambda_j} / F^-_{\lambda_j}$, $j = 1, \ldots, k$. The weights $w_1, \ldots, w_{d''}$ are among the weights $\lambda_1 < \ldots < \lambda_k$ since $X_1, \ldots, X_{d''}$ is reduced. If $\bar{X}_j$ denotes the projection onto $W_{w_j}$ of $X_j \in F_{w_j}$, then $\bar{X}_1, \ldots, \bar{X}_{d''}$ is an adapted basis of $\mathfrak{g}_*$ with the same weights $w_1, \ldots, w_{d''}$. The homogeneous Lie algebra $\mathfrak{g}_*$ equipped with the adapted basis $\bar{X}_1, \ldots, \bar{X}_{d''}$ is called the contraction of the weighted Lie algebra $\mathfrak{g}_*$.

A \textit{weighted Lie group} is a connected Lie group $G$ whose Lie algebra $\mathfrak{g}$ is weighted. The \textit{contraction} $G_*$ of a weighted Lie group is the homogeneous Lie group whose Lie algebra is $\mathfrak{g}_*$.

Any weighted Lie group can be equipped with a control distance $d_*: G \times G \to [0, \infty)$ with the following properties: the left Haar measure of the metric balls $B_r(e)$ of radius $r$ around the unit element $e \in G$ is given by $|B_r(e)| \sim r^{\hdim}$ for $r \leq 1$, where $\hdim$ is the homogeneous dimension of the contraction $G_*$, and in the case that $G$ is homogeneous the control modulus defined by $|x|_* := d_*(e, x)$ coincides with a homogeneous quasi-norm for small $x$; for $r \geq 1$, the metric coincides with the Carnot-Carath\'{e}odory metric associated with the 
reduced (unweighted) basis vector fields $X_1, \ldots, X_{d''}$ on $G$, which necessrily satisfy H\"{o}rmander's condition. This control modulus is a connected distance. Note that for large radii $r$ the volume growth is controlled by $| B_r(x)| \lesssim e^{\beta r}$ for some $\beta > 0$. (For details we refer to \cite{MaTh, Ma11}.)

\subsection{Differential operators} Let $G$ be a weighted Lie group and fix a reduced weighted algebraic basis $X_1, ..., X_{d''}$ of $\mathfrak{g}$.
A function $C : J(d'') \to \mathbb{C}$ such that $C(\alpha) = 0$ if $\| \alpha \| > m$ but $C(\alpha) \neq 0$ for at least one $\alpha \in J(d'')$ with $\| \alpha \| = m$ is said to be an $m$-th order \textit{form}. The \textit{principal part} of $C$ is the form $P: J(d'') \to \mathbb{C}$ given by the sum of terms of $C$ of order $m$:
\begin{align*}
P(\alpha) =
\left \{ \begin{array}{lcr}
& C(\alpha) & \text{ if } \| \alpha \| = m, \\
& 0 & \text{ otherwise.}
\end{array} \right.
\end{align*}
A form $C$ is said to be \textit{homogeneous} if $C = P$. The \textit{adjoint} $C^+$ of a form $C$ is defined by $C^+(\alpha) := (-1)^{| \alpha |} \overline{C(\alpha_*)}$, where $\alpha_* := (\alpha_k, \ldots, \alpha_1)$ if $\alpha = (\alpha_1, \ldots, \alpha_k) \in J(d'')$. The form $C$ is said to be \textit{symmetric} if $C^+ = C$.

For a given form $C$, we can consider the $m$-th order operators 
\begin{align*}
d\rho_G (C) = \sum_{\alpha \in J (d'')} C(\alpha) d\rho_G (X^{\alpha}) 
\end{align*}
with domain $D( d\rho_G (C)) = \bigcap_{\| \alpha \| \leq m} D(d\rho_G(X^{\alpha}))$. The associated semi-norms are defined by
\[
N_{s}(f) = \max_{\substack{\alpha \in J(d''),\\ \| \alpha \| \leq s } } \big\| d \rho_G (X^{\alpha}) f \big\|_{L^2 (G)}
\]
for $s \in \mathbb{R}$ with $s \geq 0$. If we consider the right regular representation $\rho_G$ on $L^p(G)$, $1 \leq p < \infty$, instead of $L^2(G)$, we denote the analogue of $D( d\rho_G (C))$ by $L^{p; m}(G)$ and set $\bigcap_{m = 1}^\infty L^{p; m}(G) =: L^{p; \infty}(G)$.

An $m$-th order form $C$ is said to be a \textit{weighted subcoercive form}, 
with respect to $X_1, ..., X_{d''}$, if $m \in 2 w \mathbb{N}$  and if $d\rho_G(C)$ satisfies a local \textit{G\aa rding inequality}: 
there exist $\mu > 0$, $\nu \in \mathbb{R}$ and an open neighborhood $V$ of the identity $e_G \in G$ such that
\begin{align*}
\operatorname{Re}( \langle f, d\rho_G(C) f \rangle) \geq \mu \big( N_{m/2} (f) \big)^2- \nu \| f \|_{L^2(G)}^2
\end{align*}
for all $f \in \mathcal{D}(G)$ with support $\supp f \subseteq V$. 

We say that an operator $\mathcal{L} = d\rho_G (C)$ on $G$ is \textit{weighted subcoercive} if 
$C$ is a weighted subcoercive form.

We recall that an operator $\mathcal{L} = d\rho_G (C) = d\rho_G (P)$ on a homogeneous group $G$ is called Rockland if the (principal) form $C = P$ is homogeneous with respect to the given homogeneous dilations on $G$ and $d \pi (C)$ is injective on the space of smooth vectors $\mathcal{H}^\infty$ for every non-trivial unitary irreducible representation $\pi \in \widehat{G}$. The existence of a Rockland operator on a homogeneous group $G$ implies that the dilation weights $w_1, \ldots, w_d$ can be jointly rescaled to be positive integers and that $\mathfrak{g}$ possesses a gradation.

The following theorem collects several key properties of weighted subcoercive operators. 

\begin{thm}[\cite{tERo98, Ma11}] \label{thm:subcoercive}
Let $G$ be a weighted Lie group and $C$ an $m$-th order form with principal part $P$. 
Then the following are equivalent:
\begin{enumerate}[(i)]
	\item $\mathcal{L} = d\rho_G(C)$ is a weighted subcoercive operator on $G$; 
	\item $d\rho_{G_*}(P + P^+)$ is a positive Rockland operator on the contraction $G_*$.
\end{enumerate}
\medskip
\noindent If these conditions are satisfied, then the following properties hold:
\begin{enumerate}[(a)]
\item The continuous semigroup $S = \{S_t \}_{t \geq 0}$ generated by $\mathcal{L}$ 
has a smooth kernel $k_t \in L^{1;\infty}(G) \cap C_0^{\infty} (G)$  such that
\[
d\rho_G (X^{\alpha}) S_t f =  \int_G (X^{\alpha} k_t) (x) \rho_G(x^{-1}) f \; dx
\]
for all $\alpha \in J(d')$ and $f \in L^2 (G)$. 
\item For all $\alpha \in J(d')$, there exist $b, c > 0$ and $\omega \geq 0$ such that
\begin{align}
\left | k_t(x) \right | \leq c t ^{-\frac{\hdim}{m}} \hspace{2pt} \exp (\omega t) \hspace{2pt} \exp \Bigl ( -b \bigl ( \frac{{|x|}_*^m}{t} \bigr )^{\frac{1}{m-1}} \Bigr ), \quad x \in G, t > 0, \label{eq:est_hk_tERo}
\end{align}
where $\hdim$ is the homogeneous dimension of the contraction $G_*$ 
and $|\cdot|_{*}$ is the control modulus associated with the control distance $d_*$.
\item The function $k : \mathbb{R} \times G \to \mathbb{C}$ defined by 
$(t, x) \mapsto k_t (x)$ for $t > 0$ and $k_t = 0$ for $t\leq0$ satisfies the heat equation 
\[
((\partial_t + d\rho_G (C)) k_t )(x) = \delta(t) \delta(x)
\]
as distributions, for all $(t, x) \in \mathbb{R} \times G$.
\end{enumerate}

\end{thm}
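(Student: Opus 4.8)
The plan is to recognize Theorem~\ref{thm:subcoercive} as a synthesis of the ter Elst--Robinson theory \cite{tERo98} with Martini's refinements \cite{Ma11}, all of it organized around the contraction $G_*$ and its homogeneous dilations. Accordingly, I would first address the equivalence (i) $\Leftrightarrow$ (ii). The guiding principle is that, under the dilations of $\mathfrak{g}_*$, a weighted subcoercive operator $\mathcal{L} = d\rho_G(C)$ degenerates to $d\rho_{G_*}(P)$ on the contraction: rescaling the local G\aa rding inequality over shrinking neighbourhoods $V$ of $e_G$ and passing to the limit should convert the \emph{local} coercivity of $\mathcal{L}$ into a \emph{global} one for the symmetrized principal operator $d\rho_{G_*}(P + P^+)$ on $G_*$. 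On a homogeneous group such a global G\aa rding estimate for a homogeneous operator is, by the Helffer--Nourrigat characterization of hypoellipticity, equivalent to the Rockland condition, i.e.\ injectivity of $d\pi(P+P^+)$ on smooth vectors for all nontrivial $\pi \in \widehat{G_*}$, while positivity is forced by the estimate itself. The converse runs the correspondence backwards, transporting the global estimate on $G_*$ back to the required local estimate on $G$ by a perturbation argument controlling the lower-order and remainder terms.

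With the equivalence in hand, the analytic properties (a)--(c) should follow from semigroup theory together with the subcoercive estimates. The G\aa rding inequality and the weighted Sobolev bounds encoded in the seminorms $N_{s}$ should yield holomorphy of $S = \{S_t\}$ and smoothness of its convolution kernel $k_t \in L^{1;\infty}(G) \cap C_0^\infty(G)$, after which the representation formula in (a) follows by differentiating under the integral sign. For the bound (b) I would proceed in two stages: an \emph{on-diagonal} estimate $\|k_t\|_\infty \lesssim t^{-\hdim/m}$ (up to the factor $\exp(\omega t)$) extracted from the local dimension $\hdim$ via a Nash-type inequality reflecting the small-scale volume growth $|B_r(e)| \sim r^{\hdim}$, followed by the \emph{off-diagonal} Gaussian decay obtained through Davies' exponential perturbation method, conjugating $\mathcal{L}$ by weights $e^{\pm\psi}$ with $\psi$ comparable to $|\cdot|_*$ and optimizing in $\psi$; the stretched exponent $\tfrac{1}{m-1}$ is precisely what such optimization produces for an operator of order $m$. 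Finally, (c) is the statement that $(t,x) \mapsto k_t(x)$ is the fundamental solution of $\partial_t + d\rho_G(C)$, which one reads off from $\partial_t S_t = -\mathcal{L} S_t$ together with $S_t \to \mathrm{Id}$ as $t \to 0^+$, interpreted distributionally.

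The hardest part, and the genuinely deep input, is twofold. First, the equivalence (i) $\Leftrightarrow$ (ii) rests on the Rockland/Helffer--Nourrigat theory, which is far from elementary. Second, the off-diagonal Gaussian bound (b) for arbitrary order $m$ is delicate, since for $m > 2$ the finite-propagation-speed heuristics available for sub-Laplacians are unavailable and one must instead control the complex-time behaviour of the semigroup uniformly in the Davies weight. For the purposes of this note these facts are invoked directly from \cite{tERo98, Ma11}, and only the on-diagonal bound $\|k_t\|_\infty \lesssim t^{-\hdim/m}$ will actually be needed to drive the spectral estimate of Proposition~\ref{prop:martini}.
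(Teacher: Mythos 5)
The paper offers no proof of this theorem: it is stated purely as a background result collected from \cite{tERo98, Ma11}, and your proposal likewise defers all the deep inputs (the Helffer--Nourrigat/Rockland equivalence and the higher-order Gaussian bounds via Davies' perturbation) to those same references, so the two approaches coincide in substance. Your intervening sketch is a faithful outline of how the cited proofs actually run, with only a minor caveat: the on-diagonal bound alone does not suffice for Proposition~\ref{prop:martini} --- one also needs an $L^1$ bound (or the off-diagonal decay, as the paper's proof of Proposition~\ref{lem:L2_bound_hk} uses) to control $\| k_t \|_{L^2(G)}$.
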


\section{Proof of Proposition~\ref{prop:martini}} \label{sec:proof}

In view of the importance of Proposition~\ref{prop:martini} (a special case of~\cite[Prop.~3.11]{Ma11}) for this note, we will give a short proof. The argument can essentially be split into two statements of independent interest, which concern: 1) bounding the spectral traces by the $L^2$-norm of the heat kernel when $t \to 0$; 2) an explicit computation to bound the latter.

In order to relate spectral traces to heat kernels, one can employ the abstract Plancher\-el theorem \cite[Thm.~1.6.1]{gangolli1988harmonic}; see also \cite[Thm~3.10]{Ma11} and \cite[Prop.~3]{christ1991lp}.

\begin{prop} \label{prop:trace_heat}
Let $G$ be a connected unimodular Lie group and let $\mathcal{L}$ 
be a weighted subcoercive operator on $G$. 
For $t>0$, let $k_t$ denote the heat kernel associated with $\mathcal{L}$. 
Then 
\begin{align*}
\tau (E_{(0, s)} (|\mathcal{L}|)) \lesssim \| k_{1/s} \|^2_{L^2 (G)}
\end{align*}
for all $s > 0$. 
\end{prop}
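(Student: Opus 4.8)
The plan is to compare the spectral projector $E_{(0,s)}(|\mathcal{L}|)$ with the heat operator $e^{-t|\mathcal{L}|}$ at time $t = 1/s$ through the functional calculus, and then to read off the trace of the latter from the definition \eqref{eq:trace}. The starting point is the elementary scalar inequality
\begin{align*}
\chi_{(0,s)}(\lambda) \leq e^{2}\, e^{-2\lambda/s}, \qquad \lambda \geq 0,\ s > 0,
\end{align*}
which holds because the left-hand side vanishes for $\lambda \geq s$, while for $0 < \lambda < s$ one has $e^{2}e^{-2\lambda/s} > e^{2}e^{-2} = 1$. Applying the Borel functional calculus of the positive self-adjoint operator $|\mathcal{L}|$ to this inequality yields the operator inequality
\begin{align*}
E_{(0,s)}(|\mathcal{L}|) \leq e^{2}\, e^{-2|\mathcal{L}|/s}
\end{align*}
between positive elements of $\vnrp$.

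Next I would invoke the fact that $\tau$ is a (normal) trace on $\vnrp$, hence monotone: $0 \leq A \leq B$ implies $\tau(A) \leq \tau(B)$ in $[0,\infty]$. This gives
\begin{align*}
\tau\bigl(E_{(0,s)}(|\mathcal{L}|)\bigr) \leq e^{2}\, \tau\bigl(e^{-2|\mathcal{L}|/s}\bigr),
\end{align*}
and it remains to evaluate the right-hand side. By the semigroup property the positive operator $e^{-2t|\mathcal{L}|}$ has square root $e^{-t|\mathcal{L}|}$, whose convolution kernel is exactly the heat kernel $k_t$. Thus the definition \eqref{eq:trace} gives directly
\begin{align*}
\tau\bigl(e^{-2t|\mathcal{L}|}\bigr) = \bigl\| K_{e^{-t|\mathcal{L}|}} \bigr\|_{L^2(G)}^2 = \| k_t \|_{L^2(G)}^2 .
\end{align*}
Taking $t = 1/s$ then yields $\tau(E_{(0,s)}(|\mathcal{L}|)) \leq e^{2}\,\| k_{1/s} \|_{L^2(G)}^2$, which is the asserted bound with implicit constant $e^2$.

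The main obstacle is the final identification $\tau(e^{-2t|\mathcal{L}|}) = \|k_t\|_{L^2}^2$, which hinges on recognising the definition \eqref{eq:trace} as the Plancherel trace evaluated on a heat operator. For positive self-adjoint $\mathcal{L}$ — which covers all operators of interest here, in particular the hypoelliptic sub-Laplacians and positive Rockland operators — this is immediate: then $|\mathcal{L}| = \mathcal{L}$, the operator $e^{-t\mathcal{L}}$ is the self-adjoint heat semigroup, and by the semigroup property $(e^{-2t\mathcal{L}})^{1/2} = e^{-t\mathcal{L}}$ has convolution kernel $k_t \in L^2(G) \cap C_0^\infty(G)$ by Theorem~\ref{thm:subcoercive}(a)--(b), so \eqref{eq:trace} yields the claim and finiteness is guaranteed by the Gaussian bound \eqref{eq:est_hk_tERo}. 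For a general, possibly non-self-adjoint, weighted subcoercive $\mathcal{L}$ the clean monotone comparison must be routed through the abstract Plancherel theorem \cite[Thm.~1.6.1]{gangolli1988harmonic}, which identifies $\tau(T^{*}T) = \|K_T\|_{L^2}^2$ and thereby connects the spectral data of $|\mathcal{L}|$ to the $L^2$-mass of $k_t$ via $T = e^{-t\mathcal{L}}$; this is the technically most demanding ingredient, and all inequalities are to be read in $[0,\infty]$, so that no additional finiteness assumption is required.
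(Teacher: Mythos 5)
Your proof is correct, and at its core it is the same argument as the paper's: both hinge on the pointwise comparison $\chi_{(0,s)}(\lambda)\le e^{2}e^{-2\lambda/s}$ and on identifying the trace of the heat operator at time $2/s$ with $\|k_{1/s}\|_{L^2(G)}^2$. The difference lies in the mechanism. The paper invokes the abstract Plancherel theorem to produce a Borel measure $\mu$ on $\mathbb{R}$ satisfying $\|K_{\varphi(\mathcal{L})}\|_{L^2(G)}^2=\|\varphi\|_{L^2(\mathbb{R},\mu)}^2$, writes $\tau(E_s)=\|\chi_{(0,s)}\|_{L^2(\mathbb{R},\mu)}^2$, and runs the scalar comparison inside $L^2(\mathbb{R},\mu)$; you run it at the operator level, via the functional-calculus inequality $E_{(0,s)}(|\mathcal{L}|)\le e^{2}e^{-2|\mathcal{L}|/s}$, monotonicity of the trace (which indeed follows from additivity alone), and a direct evaluation of $\tau(e^{-2|\mathcal{L}|/s})$ from the definition \eqref{eq:trace} using the square root $e^{-|\mathcal{L}|/s}$. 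Your version is more self-contained, since it uses only that \eqref{eq:trace} is a trace rather than the full Plancherel-measure statement (which is, of course, what underlies that fact); the paper's formulation has the advantage of giving $\tau(E_s)=\mu((0,s))$ exactly, which is the measure-theoretic picture used in \cite{AkRu20}. The caveat you flag --- that $k_t$ is the kernel of the semigroup generated by $\mathcal{L}$, not of $e^{-t|\mathcal{L}|}$, so the clean argument needs $\mathcal{L}$ positive self-adjoint --- is fair, but it is equally implicit in the paper's proof, whose final equality $\|e^{-(1/s)\,\cdot\,}\|_{L^2(\mathbb{R},\mu)}^2=\|k_{1/s}\|_{L^2(G)}^2$ presumes the same identification; since Proposition~\ref{prop:martini} and Theorem~\ref{thm:main} concern positive self-adjoint operators, this is the case that matters. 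Note also that for self-adjoint $\mathcal{L}$ merely bounded below your argument survives unchanged if one compares with $e^{2}e^{-2\mathcal{L}/s}$ instead, since $\chi_{(0,s)}(|\lambda|)\le e^{2}e^{-2\lambda/s}$ holds for every real $\lambda$.
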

\begin{proof}
For $s > 0$, set $E_s := E_{(0, s)} (|\mathcal{L}|)$. Let $K_{E_s}$ denote the convolution kernel of $E_s$. 
By the above-mentioned abstract Plancherel theorem, the map $s \mapsto \tau(E_s): \mathbb{R} \to [0, \infty)$ defines the distribution function of a regular Borel measure $\mu$ on $\mathbb{R}$ which for $\varphi \in L^\infty(\mathbb{R}, \mu)$ satisfies
\begin{align*}
\| K_{\varphi(\mathcal{L})} \|_{L^2(G)}^2 = \| \varphi \|_{L^2 (\mathbb{R}, \mu)}^2.
\end{align*}
For $\varphi = \chi_{(0, s)}$ this yields the estimate
\begin{align}
\begin{split} \label{est:Plancherel}
\tau(E_s) &= \tau(E_s^2) = \| K_{E_s} \|_{L^2 (G)}^2 = \| \chi_{(0, s)} \|^2_{L^2 (\mathbb{R}, \mu)} \\
&\lesssim \| e^{-(1/s) \, \cdot \,} \|_{L^2 (\mathbb{R}, \mu)}^2 = \| k_{1/s} \|_{L^2 (G)}^2,
\end{split}
\end{align}
as desired.
\end{proof}

\begin{rem}
A different proof of Proposition \ref{prop:trace_heat} 
can be obtained via \cite[Prop.~2.1]{cowling2019hausdorff}, 
which relies on the theory of non-commutative $L^p$-spaces. 
This was pointed out to us by 
Alessio Martini.
\end{rem}

The second statement is a direct consequence of the Gaussian-type heat kernel estimates due to Theorem~\ref{thm:subcoercive}~(iii)~(\cite{tERo98, Ma11}). Note that it can also be deduced by interpolating the $L^1$ and $L^\infty$ estimates for the heat kernel given by \cite[Thm.~2.3]{Ma11}.

\begin{prop} \label{lem:L2_bound_hk}
Let $G$ and $\mathcal{L}$ 
be as in Proposition~\ref{prop:martini}.
For $t>0$, let $k_t$ denote the heat kernel associated with $\mathcal{L}$. Then
\begin{align*}
\| k_t \|^2_{L^2(G)} \lesssim t^{-\frac{\hdim}{m}}, \quad  t \to 0.
\end{align*}
\end{prop}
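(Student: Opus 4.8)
The plan is to estimate $\| k_t \|_{L^2(G)}^2$ directly from the Gaussian-type pointwise bound \eqref{eq:est_hk_tERo} provided by Theorem~\ref{thm:subcoercive}~(b). Since we are only concerned with the behavior as $t \to 0$, the factor $\exp(\omega t)$ is bounded and can be absorbed into the implicit constant. The main content will be to show that
\begin{align*}
\int_G \exp\Bigl( -2b \bigl( \tfrac{|x|_*^m}{t} \bigr)^{\frac{1}{m-1}} \Bigr) \; dx \lesssim t^{\frac{Q_*}{m}}, \quad t \to 0,
\end{align*}
so that squaring the pointwise bound and integrating yields $\| k_t \|_{L^2(G)}^2 \lesssim t^{-2Q_*/m} \cdot t^{Q_*/m} = t^{-Q_*/m}$, as claimed.

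First I would reduce the integral over $G$ to an integral in the radial variable $r = |x|_*$ using the volume-growth properties of the control distance $d_*$ recorded in Section~\ref{section:subco}. The key facts are that $|B_r(e)| \sim r^{Q_*}$ for small $r$ and $|B_r(e)| \lesssim e^{\beta r}$ for large $r$. I would split the region of integration into $\{|x|_* \leq 1\}$ and $\{|x|_* > 1\}$. On the near region, the local volume growth $r^{Q_*}$ dominates, and after the substitution $r = t^{1/m} u$ (which respects the homogeneous structure, since the exponent of $t$ in \eqref{eq:est_hk_tERo} is exactly $Q_*/m$) the integral becomes $t^{Q_*/m}$ times a convergent constant integral in $u$. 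On the far region, the Gaussian factor decays superexponentially in $r^{m/(m-1)}/t^{1/(m-1)}$, which for small $t$ dwarfs any $e^{\beta r}$ volume growth, so this contribution is negligible as $t \to 0$.

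Concretely, I would write the integral as a Stieltjes integral $\int_0^\infty \exp(-2b(r^m/t)^{1/(m-1)}) \, d|B_r(e)|$, or equivalently integrate by parts to transfer the derivative onto the exponential so that the volume bound $|B_r(e)|$ appears directly; this avoids needing a pointwise density and uses only the volume bounds, which is the cleanest way to handle a merely metric-measure setting. The scaling substitution then isolates the factor $t^{Q_*/m}$ from the near-origin regime.

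The main obstacle I anticipate is handling the two volume-growth regimes cleanly in a single estimate: the local dimension $Q_*$ governs the small-ball behavior that produces the sharp power $t^{Q_*/m}$, whereas the exponential large-ball growth must be shown not to interfere. The delicate point is that the Gaussian bound \eqref{eq:est_hk_tERo} uses a stretched exponential with exponent $m/(m-1)$ in $|x|_*$, and I must verify that for every fixed $t$ small enough this beats $e^{\beta r}$ uniformly on the far region, giving an error that is $O(t^N)$ for all $N$ and hence harmless. Once the splitting and the scaling substitution are set up, the remaining estimates are routine, and the sharpness of the exponent $Q_*/m$ follows from the local regime alone.
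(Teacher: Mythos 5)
Your proposal is correct, and it starts from the same reduction as the paper: square the Gaussian bound \eqref{eq:est_hk_tERo}, absorb $\exp(\omega t)$ for small $t$, and reduce the claim to $I(t):=\int_G \exp\bigl(-2b(|x|_*^m/t)^{1/(m-1)}\bigr)\,dx \lesssim t^{\hdim/m}$. Where you genuinely diverge is in how $I(t)$ is estimated. The paper decomposes $G$ into $t$-dependent dyadic annuli $A_{j,t}=\{2^j t < |x|_*^m \le 2^{j+1}t\}$, bounds each piece by the supremum of the exponential times the volume of the enclosing ball $|B_{\sqrt[m]{2^{j+1}t}}(e)|$, and sums the resulting convergent series $\sum_j \exp(-2b\,2^{j/(m-1)})\,2^{(j+1)\hdim/m}$, using a $(1+\varepsilon)$-absorption argument (with the threshold index $N(t)$) to restrict attention to a small ball where the polynomial volume growth $|B_r(e)|\sim r^{\hdim}$ is valid. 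You instead split at a fixed radius, write the near part as a radial Stieltjes integral $\int f(r)\,d|B_r(e)|$, integrate by parts so that only the volume function (not a density) enters, and extract the factor $t^{\hdim/m}$ by the exact scaling substitution $r=t^{1/m}u$; the far part is handled by splitting the constant $2b$ in two, using one half of the stretched exponential (with exponent $m/(m-1)>1$) to dominate $e^{\beta r}$ uniformly and the other half to produce the factor $\exp(-b\,t^{-1/(m-1)})=O(t^N)$ for every $N$. The two arguments are essentially discrete and continuous versions of the same estimate---the paper's dyadic sum is a Riemann-sum analogue of your integration by parts---but yours makes the scaling origin of the exponent $\hdim/m$ more transparent and avoids the paper's slightly delicate $\varepsilon$ and $N(t)$ bookkeeping, at the cost of having to justify the Stieltjes integration by parts (monotonicity of $r\mapsto|B_r(e)|$ and of the integrand make this routine, and the boundary terms you implicitly discard are superexponentially small). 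Both routes use exactly the same inputs: the heat kernel bound and the two volume-growth regimes of the control distance.
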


\begin{proof}
Since for small $t > 0$ and Theorem~\ref{thm:subcoercive}~(b),
\begin{align*}
	\int_G |k_t(x)|^2 \, dx \lesssim t^{-\frac{2\hdim}{m}} \underbrace{\int_G \exp \Bigl (-2b {\Bigl (\frac{ |x|_*^m}{t} \Bigr)}^{\frac{1}{m-1}} \Bigr ) \, dx}_{=:I(t)},
\end{align*}
it suffices to show that $I(t) \lesssim t^{\frac{\hdim}{m}}$ as $t \to 0$. To do so, we cut $G$ into a dyadic ball at the origin $x = e$ and dyadic annuli outside the ball, all of whose radii are functions of $t > 0$. Thus, consider the sets
\begin{align*}
	A_{0, t} &:= \{ x \in G \mid |x|_*^m \leq 2t \} = B_{\sqrt[m]{2 t}}(e) \\
	A_{j, t} &:= \{ x \in G \mid 2^j t < |x|_*^m \leq 2^{j+1} t \}, \quad j \in \mathbb{N}.
\end{align*}
This yields a $t$-dependent partition $G = \bigsqcup_{j \in \mathbb{N}_0} A_{j, t}$ and the corresponding splitting
\begin{align*}
	I(t) =  \sum_{j = 0}^\infty I_j(t) := \sum_{j = 0}^\infty \int_{A_{j, t}} \exp \Bigl (-2b {\Bigl (\frac{ |x|_*^m}{t} \Bigr)}^{\frac{1}{m-1}} \Bigr ) \, dx.
\end{align*}
For each $j \in \mathbb{N}$
\begin{align*}
	2^j < \frac{|x|_*^m}{t} \leq 2^{j+1} \quad \Rightarrow \quad \sup_{x \in A_{j, t}} \exp \Bigl (-2b{\Bigl (\frac{ |x|_*^m}{t} \Bigr)}^{\frac{1}{m-1}} \Bigr ) = \exp \bigl (-2b \, 2^{\frac{j}{m-1}} \bigr )
\end{align*}
implies that
\begin{align*}
	\int_{A_{j, t}} \exp \Bigl (-2b {\Bigl (\frac{ |x|_*^m}{t} \Bigr)}^{\frac{1}{m-1}} \Bigr ) \, dx  &\leq \exp \bigl (-2b \, 2^{\frac{j}{m-1}} \bigr ) \, | A_{j, t} | \\
	&\leq \exp \bigl (-2b \, 2^{\frac{j}{m-1}} \bigr ) \, |B_{\sqrt[m]{2^{j+1} t}}(e)|,
\end{align*}
while
\begin{align*}
	I_0(t) := \int_{A_{0, t}} \exp \Bigl (-2b {\Bigl (\frac{ |x|_*^m}{t} \Bigr)}^{\frac{1}{m-1}} \Bigr ) \, dx \leq | A_{0, t} |.
\end{align*}
Furthermore, we recall that the measure of balls of large radii is controlled by $| B_r(x)| \lesssim e^{\beta r}$ for some $\beta > 0$. Since for any fixed $t >0$ the exponent $2b{ (\frac{ r^m}{t} )}^{\frac{1}{m-1}}$ is larger than $\beta r$ for large $r > 0$, it follows that $I(t)$ is finite for all $t > 0$.
We now use the fact that as $t \to 0$ most of the mass of $I(t)$ is concentrated inside an arbitrary small ball around $x = e$ and that for small balls the volume growth is determined by the homogeneous dimension $\hdim$. Thus, for every $\varepsilon > 0$ there exists some $t' > 0$ such that for all $t \in (0, t')$
\begin{align*}
	I(t) \leq (1 + \varepsilon) \int_{B_{\frac{1}{2}}(e)} \exp \Bigl (-2b {\Bigl (\frac{ |x|_*^m}{t} \Bigr)}^{\frac{1}{m-1}} \Bigr ) \, dx.
\end{align*}
So, it suffices to integrate over the union of sets $A_{j ,t}$, for which $A_{0, t}, A_{1, t}, \ldots,$ $A_{N(t)-1, t} \subset B_{\frac{1}{2}}(e)$ and $B_{\frac{1}{2}}(e) \subseteq A_{N(t), t} \subseteq B_{1}(e)$ since the integral over the infinitely many remaining sets $A_{N(t)+1, t}, A_{N(t)+2, t}, \ldots$ is absorbed by the factor $(1 + \varepsilon)$. Moreover,  we have $| A_{j, t} | \leq |B_{\sqrt[m]{2^{j+1} t}}(e)| \asymp (2t)^{\frac{\hdim}{m}} \, 2^{\frac{(j+1) \hdim}{m}}$ since for small radii $|\{ x \in G \mid |x|_* \leq r \} | \asymp r^{\hdim}$. Hence, for all $t \in (0, t')$ we have
\begin{align*}
	I(t) &\leq (1 + \varepsilon) \sum_{j = 0}^{N(t)} I_j(t) \\ &\lesssim (1 + \varepsilon) (2t)^{\frac{\hdim}{m}} \Bigl ( 1 + \sum_{j = 1}^{N(t)} \exp \bigl (-2b \, 2^{\frac{j}{m-1}} \bigr ) 2^{\frac{(j+1) \hdim}{m}} \Bigr ) \\
	&\lesssim (1 + \varepsilon) (2t)^{\frac{\hdim}{m}} \Bigl ( 1 + \sum_{j = 1}^{\infty} \exp \bigl (-2b \, 2^{\frac{j}{m-1}} \bigr ) 2^{\frac{(j+1) \hdim}{m}} \Bigr ) \\ &\lesssim t^{\frac{\hdim}{m}},
\end{align*}
which completes the proof.
\end{proof}

\begin{proof}[Proof of Proposition~\ref{prop:martini}]
The spectral growth for large $s$ is asymptotically bounded by the squared $L^2$-norm of the heat kernel for small times $t = 1/s$, as shown in Proposition~\ref{prop:trace_heat}, and the quantitative estimate for the $t$-dependent $L^2$-norm is given by Proposition~\ref{lem:L2_bound_hk}. The arguments which justify the inclusion of the examples will be given in Section~\ref{section:Ex}.
\end{proof}

\begin{proof}[Proof of Therem~\ref{thm:main}]
Combining Theorem~\ref{thm:spectralmultiplier_intro} and Proposition~\ref{prop:martini}, we obtain \eqref{eq:LpLqnorm*}. Then Corollary~\ref{cor:spectralmultiplier_intro} implies \eqref{eq:heat} and \eqref{eq:embeddings}.
\end{proof}

\section{Examples} \label{section:Ex}

We conclude this note with a handful of exemplary classes of subcoercive operators.

\begin{ex} \label{ex:Rockland}
The prototypical example of a weighted subcoercive operator is a \textit{positive essentially self-adjoint Rockland operator} on a graded group $G$.

To begin with, consider a $d$-dimensional graded group $G$ equipped with a family of homogeneous dilations $\{ \gamma_t \}_{t > 0}$ with eigenbasis $Y_1, \ldots, Y_d$ and corresponding weights $v_1, \ldots, v_d \in \mathbb{N}$.\footnote{If $G$ is graded, one can always jointly rescale the dilations' weights so that they are integers satisfying $1 \leq v_1 \leq \ldots \leq v_n$.} Furthermore, consider $G$ as a weighted group with respect to the weights $v_1, \ldots, v_d$. If for a given reduced algebraic basis $X_1, \ldots, X_{d''}$ of $\mathfrak{g}$ and a form $C : J(d'') \to \mathbb{C}$ the operator $\mathcal{L} = d\rho_G(C)$ is a positive essentially self-adjoint Rockland operator of homogeneous order $m \in 2 w \mathbb{N}$, with $w := \mathrm{lcm}(v_1, \ldots, v_{d''})$, then by \cite[Thm.~2.5]{tERo97} the operator $\mathcal{L}$ satisfies a G\aa rding inequality. By Theorem~\ref{thm:subcoercive}~(ii), it is a weighted subcoercive operator on $G$. Note that in this case the homogeneous contraction $G_*$ coincides with the original group $G$ and $Q_*$ is the homogeneous dimension of $G$.

More generally, one may consider an arbitrary positive essentially self-adjoint Rockland operator $\mathcal{L}$ on the given graded group $G$. Then the operator can be written as $\mathcal{L} = d\rho_G(C)$ for some homogeneous form $C : J(d) \to \mathbb{C}$ of degree $m \in \mathbb{N}$ in the eigenbasis $Y_1, \ldots, Y_d$, but it does not follow automatically that the form $C$ expressed with respect to the reduced algebraic basis $X_1, \ldots, X_{d''}$ is weighted subcoercive. However, by \cite[Lem.~2.2]{tERo97} and \cite[Lem.~2.4]{tERo97}, there exists an eigenbasis of the dilations, $Z_1, \ldots, Z_d$, together with weights $u_1, \ldots, u_d$ and a $d' \in \{1, \ldots, d \}$, such that $Z_1, \ldots, Z_{d'}$ is a reduced algebraic basis of $\mathfrak{g}$ with $\mathrm{span} \{ Z_1, \ldots, Z_{d'} \} \cap [\mathfrak{g}, \mathfrak{g}] = 0$; with respect to this basis every positive essentially self-adjoint Rockland operator $\mathcal{L}$ on $G$ possesses a homogeneous form $P$ of degree $m$ such that $\mathcal{L} = d\rho_G(P)$ and $m \in 2 u_j \mathbb{Z}$, $j = 1, \ldots, d'$. By \cite[Thm.~2.5]{tERo97}, the operator $\mathcal{L}$ satisfies a G\aa rding inequality and is thus weighted subcoercive on the weighted group $G$. Again, the homogeneous contraction $G_*$ coincides with the original group $G$ and $Q_*$ is the homogeneous dimension of $G$.

So, weighted subcoercivity is in particular satisfied for all Rockland operators of the form
\begin{align}
\mathcal{L} = \sum_{j=1}^{d'} (-1)^{\frac{m}{2 u_j}} c_j \hspace{1pt} d\rho_G(Z_j)^{\frac{m}{u_j}}, \label{eq:RO1}
\end{align}
with coefficients $c_1, \ldots, c_{d'} > 0$, or
\begin{align}
\mathcal{L} = \sum_{j=1}^{d} (-1)^{\frac{m}{2 w_j}} c'_j \hspace{1pt} d\rho_G(X_j)^{\frac{m}{w_j}}, \nonumber 
\end{align}
with coefficients $c'_1, \ldots, c'_{d} > 0$.

In the case of an arbitrary Rockland operator on $G$, one cannot automatically employ the heat kernel estimates for weighted subcoercive operators from Theorem~\ref{thm:subcoercive}~(b) in order to deduce the bound \eqref{est:spectral_growth}, for the reasons laid out above. However, one can always combine the heat kernel estimates for Rockland operators given by \cite[Thm.~1.1]{AutERo} with \cite[Prop.~3.11]{Ma11} to deduce the desired estimate
\begin{align*}
	\tau (E_{(0, s)} (\mathcal{L})) \lesssim s^{\frac{\hdim}{m}}, \quad s \to \infty.
\end{align*}
Note that one can even show that
\begin{align}
\tau (E_{(0, s)} (\mathcal{L})) = C(\mathcal{L}) \, s^{\frac{\hdim}{m}}, \quad s > 0, \label{eq:growth_graded_alternative}
\end{align}
with $C(\mathcal{L}) = \| K_{E_{(0, s)}(\mathcal{L})} \|_{L^2(G)}$, from relating the interaction of the group's dilations $\{ \gamma_t \}_{t > 0}$ to the spectral calculus of $\mathcal{L}$:
\begin{align*}
\bigl ( E_{(0, s)}(\mathcal{L}) \bigr )^{\frac{1}{2}}f = s^{\frac{\hdim}{m}} \, f * ( K_{E_{(0, 1)}(\mathcal{L})} \circ \gamma_{s^{\frac{1}{m}}});
\end{align*}
computing the spectral trace of $E_{(0, s)}(\mathcal{L})$, one then immediately obtains \eqref{eq:growth_graded_alternative}. A more general version of such relations on connected Lie groups, \cite[Prop.~3.18]{Ma11}, also immediately implies \eqref{eq:growth_graded_alternative} for graded groups due the fact that $\gamma_{s^{\frac{1}{m}}}$ is a Lie group automorphism.
Alternatively, as in~\cite{RoRu18}, one can deduce
\begin{align}
\tau (E_{(0, s)} (\mathcal{L})) \asymp s^{\frac{\hdim}{m}}, \label{eq:growth_graded}
\end{align}
by using Kirillov's orbit method and the spectral estimates for $\pi(\mathcal{L})$, $\pi \in \widehat{G}$, established in~\cite{tERo97}.

\end{ex}

\begin{ex} \label{ex:subL_stratified}
A special case of \eqref{eq:RO1} is the \textit{negative homogeneous sub-Laplacian}
\begin{align}
-\mathcal{L}_G = -\sum_{j=1}^{d'} d\rho_G(X_j)^2 \label{eq:ROsubL}
\end{align}
on a stratified group $G$ equipped with the canonical homogeneous dilations (also known as a Carnot group), where the basis $X_1, \ldots, X_{d'}$ of the first stratum of $\mathfrak{g}$ is automatically a reduced weighted algebraic basis of $\mathfrak{g}$ with all weights equal $1$. Particularly well known is the sub-Laplacian on the Heisenberg group $\mathbb{H}_1$
\begin{align}
\mathcal{L}_{\mathbb{H}_1} &= \bigl( \partial_x - \frac{1}{2} y \hspace{2pt} \partial_t \bigr)^2 + \bigl( \partial_y + \frac{1}{2} x \hspace{2pt} \partial_t \bigr)^2, \label{eq:subLHeisenberg}
\end{align}
whose heat kernel has been explicitly known for a long time due to~\cite{Hu76}. Since $m = 2$ and $\hdim = 4$, the heat kernel estimate recovers the growth rate
\begin{align*}
\tau (E_{(0, s)} (-\mathcal{L}_{\mathbb{H}_1})) \asymp s^{2},
\end{align*}
proved in \cite{AkRu20}. As a special case of \eqref{eq:growth_graded_alternative}, it is also a direct consequence of  \cite[Prop.~3.18]{Ma11}.
\end{ex}

\begin{ex} \label{ex:subL_unimodular}
Let $G$ be a $d$-dimensional connected unimodular Lie group and let $X_1, \ldots, X_{d'}$ be an algebraic basis of its Lie algebra $\mathfrak{g}$, that is, the associated left-invariant vector fields $d\rho_G(X_1), \ldots, d\rho_G(X_{d'})$ satisfy H\"{o}rmander's condition. If $C$ denotes the negative sum of squares of $X_1, \ldots, X_{d'}$, then the operator $$-\mathcal{L}_G := d\rho_G(C)$$ forms a left-invariant hypoelliptic sub-Laplacian on $G$. Since the system $X_1, \ldots, X_{d'}$ forms an algebraic basis of $\mathfrak{g}$, one obtains a contraction of $\mathfrak{g}$ to a stratified Lie algebra $\mathfrak{g}_*$ by assigning the weights $w_1 = \ldots = w_{d'} = 1$. By \cite[Thm.~10.1]{tERo98}, the operator $d\rho_G(C)$ is weighted subcoercive if and only if $d\rho_{G_*}(C)$ is weighted subcoercive, which is the case for an operator of the form \eqref{eq:ROsubL}. The Hausdorff dimension of $G$ with respect to the Carnot-Carath\'{e}odory distance defined by $X_1, \ldots, X_{d'}$, which equals  the homogeneous dimension $\hdim$ of $G_*$, determines the spectral growth bound
\begin{align*}
	\tau (E_{(0, s)} (-\mathcal{L}_G)) \lesssim s^{\frac{\hdim}{2}}, \quad s \to \infty.
\end{align*}

This estimate recovers the asymptotic growth rate of the eigenvalue counting function of invariant hypoellptic sub-Laplacians on connected compact Lie groups due to~\cite{HaKo}:
\begin{align}
N(\lambda) \asymp \lambda^{\frac{\hdim}{2}}, \hspace{10pt} \lambda \to \infty. \nonumber
\end{align}
\end{ex}

\begin{ex} \label{ex:subL_Selection}
Example~\ref{ex:subL_unimodular} covers a wide range of different Lie groups like, e.g., the compact groups $\mathrm{SU}(2)$ and $\mathrm{SO}(3)$, the nilpotent Heisenberg group $\mathbb{H}_1$, the simple Lie group $\mathrm{SL}(2, \mathbb{R})$, and the Euclidean motion group $\mathrm{SE}(2)$, a solvable Lie group. Although they are all unimodular, their respective Haar measures show very different growth behaviors: $\mathrm{SU}(2)$ and $\mathrm{SO}(3)$ are of bounded measure, the Haar measures of $\mathbb{H}_1$ and $\mathrm{SE}(2)$ grow polynomially, while  $\mathrm{SL}(2, \mathbb{R})$ is of exponential growth. Nevertheless all of these groups contract to the same stratified group $G_*$, the Heisenberg group $\mathbb{H}_1$\footnote{This is easily checked by their respective Lie bracket relations.}, and the local model of any sub-Laplacians on these groups is \eqref{eq:subLHeisenberg}. Hence, for all of these groups we have
\begin{align*}
\tau (E_{(0, s)} (-\mathcal{L}_G)) \lesssim s^{2}, \quad s \to \infty.
\end{align*}

Let us mention that the heat kernels for these groups were computed explicitly in \cite{AgBoGaRo}. In their analysis the authors make heavy use of the respective group Fourier transforms and, to phrase it in the language of pseudo-differential operators on Lie groups (cf.~\cite{RuTu, FiRu, Ng, CaRu}), explicitly describe the (operator-valued) symbol $\pi(\mathcal{L})$ of the sub-Laplacian $\mathcal{L}$ in question in every irreducible unitary group representation $\pi \in \widehat{G}$; in particular they describe the spectra and eigenfunctions of the operators $\pi(\mathcal{L})$.
\end{ex}

\section*{Acknowledgments}

The authors would like to thank Jordy van Velthoven for useful discussions and Alessio Martini for pointing out the link between spectral traces and heat kernels estimates in the first place. Moreover, they would like to thank the careful reviewer for their useful comments and references.

\medskip

The authors are supported by the FWO Odysseus 1 grant G.0H94.18N: Analysis and Partial Differential Equations and the FWO Senior Research Grant G022821N: Niet-commutatieve wavelet analyse. Michael Ruzhansky is also supported by the Methusalem programme of the Ghent University Special Research Fund (BOF) (Grant number 01M01021) and by the EPSRC grant EP/R003025/2.

\bibliography{Lp_Lq_Spectral_Multipliers.bib}
\bibliographystyle{plain}

\end{document}